\documentclass{amsart}

\usepackage{amsmath,amssymb,amsfonts,amstext,amsthm}
\usepackage{url}
\usepackage{graphicx}
\input xy
\xyoption{all}
\usepackage[all]{xy}

\graphicspath{{images/}{../images/}}
\usepackage{subfiles}
\usepackage{tikz}

\newcommand{\nwc}{\newcommand}

\nwc{\aaa}{\mathcal{F}}
\nwc{\aap}{\mathcal{F}_{P}}
\nwc{\al}{\alpha}

%\nwc{\beg}{\begin}

\nwc{\C}{\mathbb{C}}
\nwc{\cb}{\overline{C}}
\nwc{\ccc}{\mathfrak{c}}
\nwc{\ch}{\widehat{C}}
\nwc{\cin}{\textbf{(v)}}
\nwc{\cl}{C'}
\nwc{\cp}{\mathcal{C}_{P}}
\nwc{\cpll}{\mathfrak{c}_{P'}}
\nwc{\ct}{\widetilde{C}}

\nwc{\dd}{\mathcal{L}}
\nwc{\ddd}{\mathfrak{d}}
\nwc{\ddl}{\mathcal{L}'}
\nwc{\dlp}{\delta_{P}}
\nwc{\doi}{\textbf{(ii)}}

\nwc{\enq}{$$}

\nwc{\fl}{\flushleft}
\nwc{\fff}{\mathcal{F}}
\nwc{\ffp}{\mathcal{F}_{P}}
\nwc{\ffq}{\mathcal{F}_{Q}}
\nwc{\ffl}{\mathcal{F}'}

\nwc{\G}{\mathcal{G}}
\nwc{\Ga}{\Gamma}
\nwc{\gtl}{\widetilde{g}}

\nwc{\hra}{\hookrightarrow}
\nwc{\hua}{h^{1}(C,\aaa )}

\nwc{\kk}{{\rm K}}

\nwc{\llb}{\mathcal{L}}

\nwc{\mb}{\mathbb}
\nwc{\mc}{\mathcal}
\nwc{\mm}{\mathfrak{m}}
\nwc{\mmp}{\mathfrak{m}_{P}}
\nwc{\mpd}{\mathfrak{m}_{P}^{2}}

\nwc{\nn}{\mathbb{N}}

\nwc{\ob}{\overline{\mathcal{O}}}
\nwc{\obr}{\mathcal{O}^*}
\nwc{\obp}{\overline{\mathcal{O}}_P}
\nwc{\och}{\mathcal{O}_{\hat{C}}}
\nwc{\oh}{\hat{\mathcal{O}}}
\nwc{\ohp}{\hat{\mathcal{O}}_{P}}
\nwc{\ol}{\mathcal{O}'}
\nwc{\oma}{\Omega (\mathfrak{a})}
\nwc{\omo}{\Omega (\mathcal{O})}
\nwc{\oo}{\mathcal{O}}
\nwc{\op}{\mathcal{O}_P}
\nwc{\opc}{\mathcal{O}_{P,C}}
\nwc{\oph}{\hat{\mathcal{O}}_{P}}
\nwc{\opl}{\mathcal{O}_{P}'}
\nwc{\oplc}{\mathcal{O}_{P,C}'}
\nwc{\opll}{\mathcal{O}_{P'}}
\nwc{\opt}{\tilde{\mathcal{O}}_{P}}
\nwc{\optt}{{\mathcal{O}}_{\tilde{P}}}
\nwc{\oq}{\mathcal{O}_{Q}}
\nwc{\oqt}{\tilde{\mathcal{O}}_{Q}}
\nwc{\ot}{\widetilde{\mathcal{O}}}
\nwc{\overop}{\bar{\oo}_{P}}

\nwc{\pb}{\overline{P}}
\nwc{\pbb}{P^*}
\nwc{\pbi}{\overline{P_{i}}}
\nwc{\pbr}{\overline{P_{r}}}
\nwc{\pgmd}{\mathbb{P}^{g+2}}
\nwc{\pgmu}{\mathbb{P}^{g+1}}
\nwc{\ph}{\hat{P}}
\nwc{\pp}{\mathbb{P}}
\nwc{\prv}{\noindent\textbook{Proof}:}
\nwc{\pt}{\widetilde{P}}
\nwc{\ptl}{\tilde{P}}
\nwc{\pum}{\mathbb{P}^{1}}

\nwc{\qh}{\hat{Q}}
\nwc{\qtl}{\tilde{Q}}
\nwc{\qua}{\textbf{(iv)}}

\nwc{\ra}{\rightarrow}
\nwc{\rh}{\hat{R}}

\nwc{\sei}{\textbf{(vi)}}
\nwc{\sep}{\beq\ast\ \ast\ \ast\enq}
\nwc{\sig}{\sigma}
\nwc{\Sig}{\Sigma}
\nwc{\ssp}{S_{P}}
\nwc{\sss}{{\rm S}}

\nwc{\tre}{\textbf{(iii)}}

\nwc{\um}{\textbf{(i)}}

\nwc{\vpb}{v_{\overline{P}}}
\nwc{\vtxp}{\widetilde{V}_{x,P}}
\nwc{\vxp}{V_{x,P}}

\nwc{\wh}{\hat{\omega}}
\nwc{\whp}{\hat{\omega}_{P}}
\nwc{\woch}{\omega\cdot\mathcal{O}_{\hat{C}}}
\nwc{\woh}{\omega\cdot\hat{\mathcal{O}}}
\nwc{\ww}{\omega}
\nwc{\wwb}{\omega^*}
\nwc{\wwct}{\omega _{\widetilde{C}}}
\nwc{\wwh}{\widehat{\omega}}
\nwc{\wwhp}{\widehat{\omega}_P}
\nwc{\wwp}{\omega _{P}}
\nwc{\wwt}{\widetilde{\omega}}
\nwc{\wwtp}{\widetilde{\omega}_P}

\nwc{\zz}{\mathbb{Z}}

\newtheorem{coro}{Corollary}[section]

\newtheorem{prop}[coro]{Proposition}

\newtheorem{rem}[coro]{Remark}

\newtheorem{thm}[coro]{Theorem}

\let \fl=\flushleft

\let \ga=\gamma
\let \sub=\subset

\let \al=\alpha

\begin{document}

\title{${\rm K}$-weight bounds for $\ga$-hyperelliptic semigroups}
%\author{Ethan Cotterill and Renato Vidal Martins}
\author{Ethan Cotterill}
%\address{Instituto de Matem\'atica, UFF
%Rua M\'ario Santos Braga, S/N,
%24020-140 Niter\'oi RJ, Brazil}
%\email{cotterill.ethan@gmail.com}
\address{Instituto de Matem\'atica, Universidade Federal Fluminense, Rua Prof Waldemar de Freitas, S/N, Campus do Gragoat\'a, CEP 24.210-201, Niter\'oi, RJ, Brazil}
\email{cotterill.ethan@gmail.com}

%\author{Lia Feital}
%\address{Departamento de Matem\'atica, CCE, UFV
%Av. P H Rolfs s/n, 36570-000 Vi\c{c}osa MG, Brazil}
%\email{liafeital@ufv.br}

\author{Renato Vidal Martins}
\address{Departamento de Matem\'atica, ICEx, UFMG
Av. Ant\^onio Carlos 6627,
30123-970 Belo Horizonte MG, Brazil}
\email{renato@mat.ufmg.br}

\maketitle

\begin{abstract}
In this note, we show that {\it $\ga$-hyperelliptic} numerical semigroups of genus $g \gg \ga$ satisfy a refinement of a well-known characteristic weight inequality due to Torres. The refinement arises from substituting the usual notion of weight by an alternative version, the $K$-weight, which we previously introduced in the course of our study of unibranch curve singularities. 
\end{abstract}

\section{$K$-weights of numerical semigroups}
Let ${\rm S} \sub \mb{N}$ denote a numerical semigroup of genus $g$. Recall that this means that the complement $G_{\rm S}= \mb{N} \setminus {\rm S}$ is of cardinality $g$; say
\[
G_{\rm S}= \{\ell_1, \dots, \ell_g\}
\]
where $\ell_i < \ell_j$ whenever $1 \leq i< j \leq g$. Following \cite{CFM}, we define the {\it ${\rm K}$-weight} of ${\rm S}$ to be the quantity
\[
W_{\rm K}:= \sum_{i=1}^{g-1} (\ell_i-i) +g-1.
\]
The definition of the ${\rm K}$-weight was motivated by the study of unibranch complex curve singularities. It is also closely related to a more familiar notion of weight, which we will call the ${\rm S}$-weight, namely:
\[
W_{\rm S}:= \sum_{i=1}^{g} (\ell_i-i).
\]
The ${\rm S}$-weight emerges naturally in the study of Weierstrass semigroups of (points of) {\it smooth} complex curves. It is not hard to show that $W_{\rm K}=W_{\rm S}$ whenever ${\rm S}$ is the value semigroup of a Gorenstein singularity. In purely combinatorial terms, the ${\rm K}$- and ${\rm S}$-weights agree if and only if ${\rm S}$ is a {\it symmetric} semigroup.

\section{$K$-weights of $\ga$-hyperelliptic semigroups}
Now let $\ga \geq 0$ be an integer. Recall from \cite{To2} that ${\rm S}$ is {\it $\ga$-hyperelliptic} if it satisfies the following conditions:
\begin{enumerate}
\item ${\rm S}$ has $\ga$ even elements in $[2,4\ga]$; and
\item The $(\ga+1)$st positive element of ${\rm S}$ is $4\ga+2$.
\end{enumerate}
In \cite{CFM}, we conjectured that when $g \gg \ga$, any $\ga$-hyperelliptic numerical semigroup ${\rm S}$ satisfies
\begin{equation}\label{gamma_hyperelliptic_weights}
\binom{g-2\ga}{2}+ 2\ga \leq W_{\rm K} \leq \binom{g-2\ga}{2}+ 2\ga^2.
\end{equation}
The inequalities \eqref{gamma_hyperelliptic_weights} are not far-removed from the inequalities
\begin{equation}\label{gamma_hyperelliptic_weights_torres}
\binom{g-2\ga}{2} \leq W_{\rm S} \leq \binom{g-2\ga}{2}+ 2\ga^2
\end{equation}
proved by Torres in \cite{To,To2}. As the disparity between upper and lower bounds in \eqref{gamma_hyperelliptic_weights} is in general smaller than that of \eqref{gamma_hyperelliptic_weights_torres}, we regard the ${\rm K}$-weight inequalities as a refinement of the ${\rm S}$-weight inequalities.

\medskip
In this note, we will prove the ${\rm K}$-weight inequalities \eqref{gamma_hyperelliptic_weights} are satisfied by any $\ga$-hyperelliptic semigroup of sufficiently large genus. In doing so, we adapt the geometric interpretation of the ${\rm S}$-weight given in \cite{BdM}. Namely, each numerical semigroup may be represented as a Dyck path $\tau=\tau({\rm S})$ on a $g \times g$ square grid $\Ga$ with axes labeled by $0,1, \dots, g$. Each path starts at $(0,0)$, ends at $(g,g)$, and has unit steps upward or to the right. The $i$th step of $\tau$ is up if $i \notin {\rm S}$, and is to the right otherwise. The weight $W_{\rm S}$ of ${\rm S}$ is then equal to the total number of boxes in the Young tableau $T_{{\rm S}}$ traced by the upper and left-hand borders of the grid and the Dyck path $\tau$. The contribution of each gap $\ell$ of ${\rm S}$ to $W_{\rm S}$ is then computed by the number of boxes inside the grid and to the left of the corresponding path edge.

\begin{thm}\label{ineq_thm}
Fix a choice of non-negative integer $\ga$, and let ${\rm S}$ denote a $\ga$-hyperelliptic semigroup of genus $g \ge 2\ga+1$. The $K$-weight of ${\rm S}$ then satisfies the inequalities \eqref{gamma_hyperelliptic_weights}.
\end{thm}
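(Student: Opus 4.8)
The plan is to analyze directly the combinatorics of a $\ga$-hyperelliptic semigroup ${\rm S}$, exploiting the defining conditions to pin down the small gaps, and then to compare the resulting ${\rm K}$-weight with the Torres bounds \eqref{gamma_hyperelliptic_weights_torres} on the ${\rm S}$-weight. The first step is bookkeeping: list the gaps $\ell_1 < \cdots < \ell_g$ and split them into the ``small'' gaps lying below $4\ga+2$ and the ``large'' gaps lying above. Condition (1) says ${\rm S}$ contains exactly $\ga$ even elements in $[2,4\ga]$, hence misses exactly $\ga$ even numbers there; together with the odd numbers $1,3,\dots,4\ga-1$ (which cannot lie in ${\rm S}$ when $g \gg \ga$, since the $(\ga+1)$st positive element is $4\ga+2$), this identifies the first $3\ga$ or so gaps and shows that, for $i$ small, $\ell_i - i$ is controlled. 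Condition (2) forces $4\ga+1 \notin {\rm S}$ and $4\ga+2 \in {\rm S}$, which anchors the transition. The key structural fact, already used by Torres, is that once $4\ga+2 \in {\rm S}$, the semigroup ``looks hyperelliptic from there on'': the large gaps are, up to the $2\ga$ shift, exactly the gaps one would see for a numerical semigroup of genus $g - 2\ga$ starting with $2$, and this is precisely what produces the $\binom{g-2\ga}{2}$ main term.

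The second step is to relate $W_{\rm K}$ to $W_{\rm S}$ using the displayed definitions. Since $W_{\rm S} = \sum_{i=1}^g (\ell_i - i)$ while $W_{\rm K} = \sum_{i=1}^{g-1}(\ell_i - i) + g - 1$, we have the identity
\[
W_{\rm K} - W_{\rm S} = (g-1) - (\ell_g - g) = 2g - 1 - \ell_g.
\]
So everything comes down to computing $\ell_g$, the largest gap (the Frobenius number) of a $\ga$-hyperelliptic semigroup when $g \gg \ga$. Here the $\ga$-hyperelliptic hypothesis is decisive: because ${\rm S}$ contains $4\ga+2$ and, by a standard argument, also contains $2$ times every sufficiently large integer relative to the ``genus $g-2\ga$ part'', one shows the Frobenius number satisfies $\ell_g = 2(g - 2\ga) - 1 + (\text{small correction bounded in terms of }\ga)$, so that $2g - 1 - \ell_g$ lies in an interval of the form $[\,2\ga,\; 4\ga + c(\ga)\,]$ whose endpoints match the gap between \eqref{gamma_hyperelliptic_weights} and \eqref{gamma_hyperelliptic_weights_torres}. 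More carefully: the lower bound on $W_{\rm K}$ requires $2g-1-\ell_g \ge 2\ga$, i.e. $\ell_g \le 2g - 2\ga - 1$, which is exactly the extremal case where ${\rm S} \cap [4\ga+2, \infty)$ is as ``full'' as possible; the upper bound requires $W_{\rm K} \le \binom{g-2\ga}{2} + 2\ga^2$, which combined with $W_{\rm S} \ge \binom{g-2\ga}{2}$ and the identity forces $2g - 1 - \ell_g \le 2\ga^2 + (W_{\rm S} - \binom{g-2\ga}{2})$, and one checks the two error terms never conspire to overshoot.

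The cleanest route, and the one I would actually write up, is therefore: (i) prove the lower bound $W_{\rm K} \ge \binom{g-2\ga}{2} + 2\ga$ by combining Torres's $W_{\rm S} \ge \binom{g-2\ga}{2}$ with the inequality $\ell_g \le 2g - 2\ga - 1$ valid for all $\ga$-hyperelliptic ${\rm S}$ of genus $g \ge 2\ga+1$ (this last inequality is a direct consequence of condition (2), since $4\ga+2, 4\ga+4, \dots \in {\rm S}$ forces all sufficiently large even numbers, and then all sufficiently large numbers, into ${\rm S}$, with the largest gap bounded as claimed); and (ii) prove the upper bound $W_{\rm K} \le \binom{g-2\ga}{2} + 2\ga^2$ by showing $W_{\rm K} \le W_{\rm S}$ in the relevant range, i.e. $\ell_g \ge 2g-1$... which is \emph{false} in general, so instead one shows $W_{\rm K} \le W_{\rm S}$ fails only by a controlled amount and absorb it, or — more robustly — one re-runs Torres's weight estimate with the $W_{\rm K}$ summation directly, dropping the last term $\ell_g - g$ and adding $g-1$, and checks the net change is nonpositive for $g \gg \ga$ because $\ell_g - g \ge g - 1 - O(\ga)$ in the upper-weight extremal configurations. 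The main obstacle I anticipate is exactly this upper bound: one must verify that discarding the (typically large) final contribution $\ell_g - g$ and replacing it by the constant $g-1$ does not break Torres's estimate, which requires knowing $\ell_g$ is close to $2g$ precisely in the semigroups that come close to achieving the upper ${\rm S}$-weight bound $\binom{g-2\ga}{2}+2\ga^2$ — a compatibility between ``large Frobenius number'' and ``large weight'' that needs the Dyck-path picture from \cite{BdM} to see clearly, identifying the extremal tableau and reading off both quantities from it.
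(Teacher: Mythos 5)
Your reduction via the identity $W_{\rm K}-W_{\rm S}=2g-1-\ell_g$ is correct and is indeed the relation the paper itself uses later (in the proof of Proposition~\ref{refined_weights}), but both halves of your argument break down at the decisive step. For the lower bound you invoke the inequality $\ell_g\le 2g-2\ga-1$ ``valid for all $\ga$-hyperelliptic ${\rm S}$''; this is false for every $\ga\ge 1$. Any \emph{symmetric} $\ga$-hyperelliptic semigroup has Frobenius number $\ell_g=2g-1$; the semigroup ${\rm S}_0=\langle 4,4\ga+2,2g-4\ga+1\rangle$ appearing in the paper is such an example (for $\ga=1$, $g=6$ one gets $\langle 4,6,9\rangle$ with gaps $\{1,2,3,5,7,11\}$ and $\ell_g=11=2g-1>2g-3$). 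The true situation is a trade-off: when $\ell_g$ is large the semigroup is close to symmetric and $W_{\rm S}$ exceeds Torres's minimum by a compensating amount, so you cannot bound the two summands $W_{\rm S}$ and $2g-1-\ell_g$ independently; you must couple them. The paper avoids this by working with the truncated tableau $T_{\rm K}$ directly (drop the top row of the Dyck-path grid, observe the minimizing configuration is a staircase with the $\ga$ even elements of ${\rm S}\cap[2,4\ga]$ as large as possible, and compute $W(T_{\rm K})=\binom{g-2\ga-1}{2}$, whence $W_{\rm K}=\binom{g-2\ga}{2}+2\ga$).

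For the upper bound you candidly note that your first idea ($W_{\rm K}\le W_{\rm S}$, i.e.\ $\ell_g\ge 2g-1$) is false and then gesture at ``re-running Torres's estimate'' and ``absorbing a controlled error,'' but no actual argument is supplied; this is precisely where the work lies. The paper's route is: dispose of symmetric ${\rm S}$ immediately (there $W_{\rm K}=W_{\rm S}$ and Torres's bound applies verbatim), and for nonsymmetric ${\rm S}$ convert weight-maximization into minimization of a modified total ramification $R_{\rm K}$, which differs from Torres's $R$ by the term $-2k$ counting odd elements of ${\rm S}$ at or above the conductor. The maximum is then pinned at ${\rm S}_0$ by showing $\sum_{i=1}^{k}(u_i({\rm S})-u_i({\rm S}_0))\ge 2k$, which follows because each odd element $u_i({\rm S}_0)$ is as small as possible (by symmetry of ${\rm S}_0$) while ${\rm S}$ is not symmetric. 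If you want to salvage your approach, you would need a quantitative statement of the form $W_{\rm S}\ge\binom{g-2\ga}{2}+\big(\ell_g-(2g-2\ga-1)\big)$ for the lower bound, and a matching coupling for the upper bound; neither is in your write-up, so as it stands the proof has a genuine gap.
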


\begin{proof}
Suppose ${\rm S}$ is a $\ga$-hyperelliptic semigroup of genus $g$. For the calculation of the ${\rm K}$-weight of ${\rm S}$, the largest gap $\ell_g$ i $\mb{N} \setminus {\rm S}$ is irrelevant, so we focus on the subdiagram of $\Ga$ given by omitting the uppermost row of boxes in the grid, and on the corresponding subtableau $T_{\rm S}$, which we will denote by $T_{\rm K}$.  

\medskip
It is well-known (and follows easily from the semigroup structure in any case) that every even number greater than or equal to $4\ga$ belongs to ${\rm S}$. So the weight contributed by elements $m \geq 4\ga$ of ${\rm S}$ will be minimized when there are no such {\it odd} elements $m$ strictly less than $\ell_g$. Geometrically, this means that $T_{\rm K}$ stabilizes to a {\it staircase}, i.e a path in which up- and rightward steps alternate. It follows easily $T_{\rm K}$ will be of minimal weight precisely when it {\it is} a staircase for which the $\ga$ even numbers $P_i \in [2,4\ga], 1 \leq i \leq \ga$ that belong to ${\rm S}$ are maximal, namely when
\[
P_i=2\ga+2j, 1\leq j \leq \ga.
\]
Since the first column of $T_{\rm K}$ has precisely $(g-1)-(2\ga+2-1)=g-2\ga-2$ boxes, we deduce that its total weight is
\[
W(T_{\rm K})= \binom{g-2\ga-1}{2}
\]
and it follows that
\[
W_{\rm K}= W(T_{\rm K})+ g-1= {g-2\ga \choose 2}+ 2\ga.
\]
It is clear, moreover, that there are $\ga$-hyperelliptic semigroups of genus $g$ whose ${\rm K}$-weights realize the minimum value of $W_{\rm K}$.

\begin{rem}
Strictly speaking, the preceding geometric argument requires $g \geq 2\ga+2$. However, it is easy to check that when $g=2\ga+1$, the minimal ${\rm K}$-weight is realized by a $\ga$-hyperelliptic semigroup with an empty ${\rm K}$-tableau $T_{\rm K}$. The ${\rm K}$-weight of the corresponding semigroup is then $W_{\rm K}=g-1=2\ga$, as desired. The same argument shows that when $g=2\ga$, the minimal ${\rm K}$-weight is $g-1=2\ga-1$. So our lower bound on $g$ is sharp.
\end{rem}

\medskip
We will now argue that the maximum possible $\ga$-hyperelliptic ${\rm K}$-weight is achieved precisely when
\[
{\rm S}={\rm S}_0:= \langle 4, 4\ga+2, 2g-4\ga+1 \rangle 
\]
just as is the case for ${\rm S}$-weights \cite{To2}. See Figure 1 for the ${\rm K}$-tableau associated with ${\rm S}_0$ when $\ga=3$ and $g=20$.

%\subfile{semigroups/S_0}
\begin{figure}

\begin{tikzpicture}[scale=0.30]
%\hspace{90pt}
\draw[blue, very thin] (0,0) rectangle (20,20);
\filldraw[draw=blue, fill=red] (0,3) rectangle (1,4);

\filldraw[draw=blue, fill=red] (0,4) rectangle (1,5);
%\filldraw[draw=blue, fill=lightgray] (1,4) rectangle (2,5);

\filldraw[draw=blue, fill=red] (0,5) rectangle (1,6);
%\filldraw[draw=blue, fill=lightgray] (1,5) rectangle (2,6);
%\filldraw[draw=blue, fill=lightgray] (2,5) rectangle (3,6);

\filldraw[draw=blue, fill=red] (0,6) rectangle (1,7);
\filldraw[draw=blue, fill=red] (1,6) rectangle (2,7);
%\filldraw[draw=blue, fill=lightgray] (2,6) rectangle (3,7);
%\filldraw[draw=blue, fill=lightgray] (3,6) rectangle (4,7);

\filldraw[draw=blue, fill=lightgray] (0,7) rectangle (1,8);
\filldraw[draw=blue, fill=red] (1,7) rectangle (2,8);
%\filldraw[draw=blue, fill=lightgray] (2,7) rectangle (3,8);
%\filldraw[draw=blue, fill=lightgray] (3,7) rectangle (4,8);
%\filldraw[draw=blue, fill=lightgray] (4,7) rectangle (5,8);
%\filldraw[draw=blue, fill=lightgray] (5,7) rectangle (6,8);
%\filldraw[draw=blue, fill=lightgray] (6,7) rectangle (7,8);

\filldraw[draw=blue, fill=lightgray] (0,8) rectangle (1,9);
\filldraw[draw=blue, fill=lightgray] (1,8) rectangle (2,9);

\filldraw[draw=blue, fill=lightgray] (0,9) rectangle (1,10);
\filldraw[draw=blue, fill=lightgray] (1,9) rectangle (2,10);
\filldraw[draw=blue, fill=lightgray] (2,9) rectangle (3,10);

\filldraw[draw=blue, fill=lightgray] (0,10) rectangle (1,11);
\filldraw[draw=blue, fill=lightgray] (1,10) rectangle (2,11);
\filldraw[draw=blue, fill=lightgray] (2,10) rectangle (3,11);
\filldraw[draw=blue, fill=lightgray] (3,10) rectangle (4,11);

\filldraw[draw=blue, fill=lightgray] (0,11) rectangle (1,12);
\filldraw[draw=blue, fill=lightgray] (1,11) rectangle (2,12);
\filldraw[draw=blue, fill=lightgray] (2,11) rectangle (3,12);
\filldraw[draw=blue, fill=lightgray] (3,11) rectangle (4,12);
\filldraw[draw=blue, fill=lightgray] (4,11) rectangle (5,12);

\filldraw[draw=blue, fill=lightgray] (0,12) rectangle (1,13);
\filldraw[draw=blue, fill=lightgray] (1,12) rectangle (2,13);
\filldraw[draw=blue, fill=lightgray] (2,12) rectangle (3,13);
\filldraw[draw=blue, fill=lightgray] (3,12) rectangle (4,13);
\filldraw[draw=blue, fill=lightgray] (4,12) rectangle (5,13);
\filldraw[draw=blue, fill=lightgray] (5,12) rectangle (6,13);

\filldraw[draw=blue, fill=lightgray] (0,13) rectangle (1,14);
\filldraw[draw=blue, fill=lightgray] (1,13) rectangle (2,14);
\filldraw[draw=blue, fill=lightgray] (2,13) rectangle (3,14);
\filldraw[draw=blue, fill=lightgray] (3,13) rectangle (4,14);
\filldraw[draw=blue, fill=lightgray] (4,13) rectangle (5,14);
\filldraw[draw=blue, fill=lightgray] (5,13) rectangle (6,14);
\filldraw[draw=blue, fill=lightgray] (6,13) rectangle (7,14);

\filldraw[draw=blue, fill=lightgray] (0,14) rectangle (1,15);
\filldraw[draw=blue, fill=lightgray] (1,14) rectangle (2,15);
\filldraw[draw=blue, fill=lightgray] (2,14) rectangle (3,15);
\filldraw[draw=blue, fill=lightgray] (3,14) rectangle (4,15);
\filldraw[draw=blue, fill=lightgray] (4,14) rectangle (5,15);
\filldraw[draw=blue, fill=lightgray] (5,14) rectangle (6,15);
\filldraw[draw=blue, fill=lightgray] (6,14) rectangle (7,15);
\filldraw[draw=blue, fill=lightgray] (7,14) rectangle (8,15);

\filldraw[draw=blue, fill=lightgray] (0,15) rectangle (1,16);
\filldraw[draw=blue, fill=lightgray] (1,15) rectangle (2,16);
\filldraw[draw=blue, fill=lightgray] (2,15) rectangle (3,16);
\filldraw[draw=blue, fill=lightgray] (3,15) rectangle (4,16);
\filldraw[draw=blue, fill=lightgray] (4,15) rectangle (5,16);
\filldraw[draw=blue, fill=lightgray] (5,15) rectangle (6,16);
\filldraw[draw=blue, fill=lightgray] (6,15) rectangle (7,16);
\filldraw[draw=blue, fill=lightgray] (7,15) rectangle (8,16);
\filldraw[draw=blue, fill=lightgray] (8,15) rectangle (9,16);

\filldraw[draw=blue, fill=lightgray] (0,16) rectangle (1,17);
\filldraw[draw=blue, fill=lightgray] (1,16) rectangle (2,17);
\filldraw[draw=blue, fill=lightgray] (2,16) rectangle (3,17);
\filldraw[draw=blue, fill=lightgray] (3,16) rectangle (4,17);
\filldraw[draw=blue, fill=lightgray] (4,16) rectangle (5,17);
\filldraw[draw=blue, fill=lightgray] (5,16) rectangle (6,17);
\filldraw[draw=blue, fill=lightgray] (6,16) rectangle (7,17);
\filldraw[draw=blue, fill=lightgray] (7,16) rectangle (8,17);
\filldraw[draw=blue, fill=lightgray] (8,16) rectangle (9,17);
\filldraw[draw=blue, fill=lightgray] (9,16) rectangle (10,17);

\filldraw[draw=blue, fill=lightgray] (0,17) rectangle (1,18);
\filldraw[draw=blue, fill=lightgray] (1,17) rectangle (2,18);
\filldraw[draw=blue, fill=lightgray] (2,17) rectangle (3,18);
\filldraw[draw=blue, fill=lightgray] (3,17) rectangle (4,18);
\filldraw[draw=blue, fill=lightgray] (4,17) rectangle (5,18);
\filldraw[draw=blue, fill=lightgray] (5,17) rectangle (6,18);
\filldraw[draw=blue, fill=lightgray] (6,17) rectangle (7,18);
\filldraw[draw=blue, fill=lightgray] (7,17) rectangle (8,18);
\filldraw[draw=blue, fill=lightgray] (8,17) rectangle (9,18);
\filldraw[draw=blue, fill=lightgray] (9,17) rectangle (10,18);
\filldraw[draw=blue, fill=lightgray] (10,17) rectangle (11,18);
\filldraw[draw=blue, fill=red] (11,17) rectangle (12,18);
\filldraw[draw=blue, fill=red] (12,17) rectangle (13,18);

\filldraw[draw=blue, fill=lightgray] (0,18) rectangle (1,19);
\filldraw[draw=blue, fill=lightgray] (1,18) rectangle (2,19);
\filldraw[draw=blue, fill=lightgray] (2,18) rectangle (3,19);
\filldraw[draw=blue, fill=lightgray] (3,18) rectangle (4,19);
\filldraw[draw=blue, fill=lightgray] (4,18) rectangle (5,19);
\filldraw[draw=blue, fill=lightgray] (5,18) rectangle (6,19);
\filldraw[draw=blue, fill=lightgray] (6,18) rectangle (7,19);
\filldraw[draw=blue, fill=lightgray] (7,18) rectangle (8,19);
\filldraw[draw=blue, fill=lightgray] (8,18) rectangle (9,19);
\filldraw[draw=blue, fill=lightgray] (9,18) rectangle (10,19);
\filldraw[draw=blue, fill=lightgray] (10,18) rectangle (11,19);
\filldraw[draw=blue, fill=lightgray] (11,18) rectangle (12,19);
\filldraw[draw=blue, fill=red] (12,18) rectangle (13,19);
\filldraw[draw=blue, fill=red] (13,18) rectangle (14,19);
\filldraw[draw=blue, fill=red] (14,18) rectangle (15,19);
\filldraw[draw=blue, fill=red] (15,18) rectangle (16,19);

\end{tikzpicture}

\caption{Tableau $T_{\rm K}$ associated with the weight-maximizing $\ga$-hyperelliptic semigroup ${\rm S}_0=\langle 4, 4\ga+2, 2g-4\ga+1 \rangle$ when $\ga=3$ and $g=20$. The (irrelevant) uppermost line is left empty, while the disparity in weights between the maximizing and minimizing semigroups is in red.}
\end{figure}
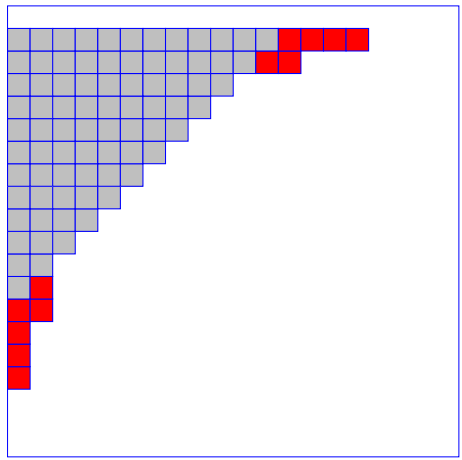

\medskip
Since ${\rm K}$-weight and ${\rm S}$-weight agree for symmetric semigroups, we may (and shall) assume that ${\rm S}$ is nonsymmetric. We will exploit the dual relationship between ramification and weight already used in \cite{To2} to prove that ${\rm S}_0$ is of maximal ${\rm S}$-weight among $\ga$-hyperelliptic semigroups of genus $g$. (Indeed, our argument is a modification of that used in \cite{To2}.)

\medskip
Namely, let $R=R({\rm S})$ denote the {\it total ramification} of ${\rm S}$, given by
\begin{equation}\label{total_ramification}
R:= \sum_{i=1}^g (m_i-i)= \sum_{i=1}^g m_i - \binom{g+1}{2}
\end{equation}
where $m_1 < \cdots < m_g$ are the $g$ smallest nonzero elements of ${\rm S}$. Using the structure theory for $\ga$-hyperelliptic semigroups presented in \cite{To,To2}, we may rewrite the total ramification \eqref{total_ramification} of a $\ga$-hyperelliptic semigroup ${\rm S}$ in the following form:
\[
R= \sum_{i=1}^{\ga} (n_i+u_i)+ \sum_{i=1}^{g-2\ga} (4\ga+2i)- \binom{g+1}{2}
\]
where $n_1 < \cdots < n_{\ga}$ are the smallest nonzero even elements and $u_1 > \cdots > u_{\ga}$ are the odd nonzero elements of ${\rm S}$ less than $2g$, respectively.

\medskip
Maximizing $W_{\rm S}$ is then equivalent to minimizing $R({\rm S})$. Similarly, maximizing $W_{\rm K}$ is equivalent to minimizing %the contribution $R_{\leq g-1}({\rm S})$ of the smallest $(g-1)$ elements to $R({\rm S})$, {\it minus} 
\begin{equation}\label{K-ramification}
R_{\rm K}:= \sum_{i=1}^{\ga} (n_i+u_i)+ \sum_{i=1}^{g-2\ga-1} (4\ga+2i)- \binom{g}{2}- 2k
\end{equation}
where $k=k({\rm S})$ is the number of odd elements of ${\rm S}$ greater than or equal to the conductor.
Geometrically speaking, \eqref{K-ramification} computes the area of the complement of $T_{\rm K}$ in its minimal $(g-1) \times (g-1)$ bounding box inside of $\Ga$.

\medskip
On the other hand, when ${\rm S}={\rm S}_0$ the sum $\sum_{i=1}^{\ga} n_i$ is minimized, while $k({\rm S}_0)=0$. %Moreover (and this is the key point of Torres' argument in \cite{To2}) by construction we have
%\begin{equation}\label{n_iplusu_i}
%n_i+u_i \geq 2g+1
%\end{equation}
%for every $1 \leq i \leq \ga$, for every $\ga$-hyperelliptic semigroup ${\rm S}$ of genus $g$, and when ${\rm S}={\rm S}_0$ there is equality in \eqref{n_iplusu_i} for every $i$.
So in light of \eqref{K-ramification}, it suffices to check that
\[
\sum_{i=1}^{k({\rm S})} (u_i({\rm S})- u_i({\rm S}_0))- 2k({\rm S})
\]
is nonnegative for every nonsymmetric $\ga$-hyperelliptic semigroup ${\rm S}$, for which it suffices in turn to show that
\begin{equation}\label{u_i_inequality}
u_i({\rm S})- u_i({\rm S}_0) \geq 2
\end{equation}
for all $1 \leq i \leq k$. The inequality \eqref{u_i_inequality} follows immediately, however, from the fact that ${\rm S}_0$ is symmetric, while ${\rm S}$ is not.
\end{proof}

%\begin{rem}
%It is interesting to ask precisely how large $g$ needs to be relative to $\ga$ for the conclusion of Theorem~\ref{ineq_thm} to hold. The {\it lower} bound on $W_K$ in fact requires $g \geq 2\ga+1$. Indeed, when $g=2\ga$, the unique $\ga$-hyperelliptic semigroup for which $T_{\rm K}$ is empty has $K$-weight $2\ga-1$. On the other hand, the argument we have used to bound $W_{\rm K}$ from above in the symmetric case (and specifically, the fact that no odd elements of ${\rm S}$ appear to the left of the ${\rm K}-diagonal$) relies on the assumption that $g \geq 4\ga$; while in the nonsymmetric case, our argument merely requires $g \geq 2\ga+1$.
%\end{rem}

Finally, just as in \cite{OTV}, it is natural to ask for refinements of Theorem~\ref{ineq_thm}. We have the following ${\rm K}$-weight analogue of \cite[Props. 2.10, 2.12(1)]{OTV}.
\begin{prop}\label{refined_weights}
Let $\ga \geq 1$ be an integer, and let ${\rm S}$ denote a $\ga$-hyperelliptic semigroup of genus $g \geq 3\ga$. Assume that the multiplicity, i.e., the smallest nonzero element of ${\rm S}$ is 4. The ${\rm K}$-weight of ${\rm S}$ then satisfies
%\begin{itemize}
%\item[a] Assume that $g \geq 3\ga$ and $m=4$. The ${\rm K}$-weight of ${\rm S}$ then satisfies
\[
W_{\rm K} \in \bigg\{\binom{g-2\ga}{2}+ \ga^2+\ga+ k^2-3k+2: k=1, \dots, \ga+1 \bigg\};
\]
and $W_{\rm K}=\binom{g-2\ga}{2}+ \ga^2+\ga+ k^2-3k+2$ if and only if ${\rm S}=\langle 4, 4\ga+2, 2g-2\ga-2k+3,2g-2\ga+2k+1\rangle$. In particular, every $\ga$-hyperelliptic semigroup with multiplicity $m=4$ of {\it nonmaximal} weight satisfies
\[
\binom{g-2\ga}{2}+ \ga^2+\ga \leq W_{\rm K} \leq \binom{g-2\ga}{2}+ 2(\ga^2-\ga)+2.
\]
%\item[(b)] Assume that $g \geq 2\ga+1$ and $m \geq 6$. We then have
%\end{itemize}
\end{prop}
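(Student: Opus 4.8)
The plan is to bootstrap from the $S$-weight classification of \cite[Props.~2.10, 2.12(1)]{OTV}, via the elementary identity
\[
W_{\rm K}-W_{\rm S}=(g-1)-(\ell_g-g)=2g-1-\ell_g
\]
relating the two weights of a numerical semigroup through its largest gap $\ell_g$.

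First I would pin down the structure of a $\gamma$-hyperelliptic semigroup ${\rm S}$ of multiplicity $4$. As $4\in{\rm S}$, all multiples of $4$ lie in ${\rm S}$; the $\gamma$-hyperelliptic hypotheses then force the even elements of ${\rm S}$ in $[2,4\gamma]$ to be precisely $4,8,\dots,4\gamma$ and force $4\gamma+2\in{\rm S}$, so ${\rm S}\supseteq\langle 4,4\gamma+2\rangle$ and ${\rm S}$ contains every even integer $\ge 4\gamma$. Thus ${\rm S}$ is determined by its set of odd elements; and because ${\rm S}$ is closed under adding $4$ and is cofinite, the odd elements of ${\rm S}$ in each residue class modulo $4$ form a final segment of the corresponding arithmetic progression, so the odd part of ${\rm S}$ is encoded by the pair $d<e$ of smallest odd elements in the two residue classes (with $e\le d+(4\gamma+2)$, and $d,e$ possibly $\ge 2g$). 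Counting the $g-\gamma$ odd gaps gives $d+e=4(g-\gamma+1)$; this together with $d\ge 4\gamma+3$ (since $4,8,\dots,4\gamma,4\gamma+2$ are the first $\gamma+1$ positive elements) and $d<e$ forces $d=2g-2\gamma-2k+3$ for a unique $k\in\{1,\dots,\gamma+1\}$, and then ${\rm S}={\rm S}_k:=\langle 4,4\gamma+2,\,2g-2\gamma-2k+3,\,2g-2\gamma+2k+1\rangle$. These are exactly the semigroups in the $S$-weight classification. (For $g$ near $3\gamma$ some ${\rm S}_k$ fail to be $\gamma$-hyperelliptic and the corresponding claims below hold vacuously.)

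Next I would read off the largest gap of ${\rm S}_k$. Since the odd elements of ${\rm S}_k$ in the residue class of $e$ are exactly $e,e+4,e+8,\dots$, the predecessor $e-4$ is a gap, and one checks it dominates every other gap: the other odd residue class contributes only gaps $<d<e$, and the even gaps are $\le 4\gamma-2<e-4$ because $g\ge 3\gamma$. Hence $\ell_g=e-4=2g-2\gamma+2k-3$, so $2g-1-\ell_g=2\gamma-2k+2$ and therefore $W_{\rm K}({\rm S}_k)=W_{\rm S}({\rm S}_k)+2(\gamma+1-k)$.

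Finally, I would substitute $W_{\rm S}({\rm S}_k)=\binom{g-2\gamma}{2}+\gamma(\gamma-1)+k(k-1)$ from \cite[Props.~2.10, 2.12(1)]{OTV} to obtain $W_{\rm K}({\rm S}_k)=\binom{g-2\gamma}{2}+\gamma^2+\gamma+k^2-3k+2$, with the ``if and only if'' inherited from the $S$-weight case. The concluding inequalities then come from analysing $f(k):=(k-1)(k-2)$ on $\{1,\dots,\gamma+1\}$: for $\gamma\ge 2$ it is maximised uniquely at $k=\gamma+1$, its maximum over the nonmaximal range $\{1,\dots,\gamma\}$ is $f(\gamma)=\gamma^2-3\gamma+2$, and its minimum $0$ is attained at $k\in\{1,2\}$ (for $\gamma=1$ the two listed values coincide and the final inequalities hold trivially). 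I expect the substantive work to be Step~1 --- proving the multiplicity-$4$, $\gamma$-hyperelliptic semigroups of genus $g$ are exactly the ${\rm S}_k$, and reconciling this with the parametrisation of \cite{OTV} --- together with the verification that $\ell_g=e-4$, where the point needing care is that one of the two odd residue classes modulo $4$ may contain no element below $2g$.
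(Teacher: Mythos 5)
Your proposal is correct and follows essentially the same route as the paper: both rest on the $S$-weight classification of \cite[Props.~2.10, 2.12(1)]{OTV}, the identity $W_{\rm K}=W_{\rm S}+2g-1-\ell_g$, and the computation $\ell_g=2g-2\ga+2k-3$ for the classifying semigroups. The only difference is that you re-derive from scratch the fact that the multiplicity-$4$ $\ga$-hyperelliptic semigroups are exactly the ${\rm S}_k$ (which the paper simply imports from \cite{OTV}) and you spell out the elementary optimization of $(k-1)(k-2)$ that the paper leaves implicit.
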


\begin{proof}
Our result is a consequence of \cite[Prop. 2.10]{OTV}, which establishes that
\[
W_{\rm S} \in \bigg\{\binom{g-2\ga}{2}+ \ga^2-\ga+ k^2-k: k=1, \dots, \ga+1 \bigg\};
\]
and $W_{\rm S}=\binom{g-2\ga}{2}+ \ga^2-\ga+ k^2-k$ if and only if ${\rm S}=\langle 4, 4\ga+2, 2g-2\ga-2k+3,2g-2\ga+2k+1\rangle$. Indeed, letting ${\rm S}_0$ denote the latter semigroup, we have
\[
{\rm S}_0= 2 \langle 2, 2\ga+1 \rangle \sqcup \{2g-2\ga-2k+3, 2g-2\ga-2k+7, \dots, 2g-2\ga+2k-1, 2g-2\ga+2k+1, \dots\};
\]
in particular, the largest gap of ${\rm S}_0$ is $\ell_g=2g-2\ga+2k-3$. We conclude immediately using the general fact that $W_{\rm K}= W_{\rm S}+ 2g-1-\ell_g$.
\end{proof}
It would be interesting to extend the reach of Proposition~\ref{refined_weights} to higher multiplicities, and e.g., to establish an analogue of \cite[Prop. 2.12(2)]{OTV}, which establishes an upper bound on the ${\rm S}$-weight when $m \geq 6$. We leave this for future work.


\begin{thebibliography}{10}
\bibitem{BT} M. Bernardini and F. Torres, {\it Counting numerical semigroups by genus and even gaps}, Discrete Math {\bf 340} (2017), no. 12, 2853--2863.
\bibitem{BdM} M. Bras-Amor\'os and A. de Mier, {\it Representation of numerical semigroups by Dyck paths}, Semigroup Forum {\bf 75} (2007), no. 3, 676--681.
\bibitem{CFM} E. Cotterill, L. Feital, and R.V. Martins, {\it Singular rational curves with points of nearly-maximal weight}, J. Pure Appl. Alg. {\bf 222} (2018), no. 11, 3448--3469.
\bibitem{OTV} G. Oliveira, F. Torres, and J. Villanueva, {\it On the weight of numerical semigroups}, J. Pure Appl. Alg. {\bf 214} (2010), no. 11, 1955--1961.
\bibitem{To} F. Torres, {\it Weierstrass points and double coverings of curves with application: symmetric numerical semigroups which cannot be realized as Weierstrass semigroups}, Manuscripta Math. {\bf 83} (1994), 39--58.
\bibitem{To2} F. Torres, {\it On $\ga$-hyperelliptic numerical semigroups}, Semigroup Forum {\bf 55} (1997), 364--379.
\end{thebibliography}
\end{document}